\theoremstyle{plain} \newtheorem{lem}{Lemma}[section]
\theoremstyle{plain} \newtheorem{prop}[lem]{Proposition}
\theoremstyle{plain} 
\theoremstyle{plain} \newtheorem{cor}[lem]{Corollary}
\theoremstyle{plain} 
\theoremstyle{definition}
\newtheorem{rem}[lem]{Remark}
\newcommand{\Q}{\mathbb{Q}}
\newcommand{\R}{\mathbb{R}}
\newcommand{\C}{\mathbb{C}}
\newcommand{\Z}{\mathbb{Z}}
\newcommand{\Hind}{\mathrm{\mathop{Hind}}}
\newcommand{\ord}{\mathrm{\mathop{ord}}}
\newcommand{\Ord}{\mathcal{O}}
\newcommand{\Gal}{\mathrm{\mathop{Gal}}}
\newcommand{\diag}{\mathrm{\mathop{diag}}}
\newcommand{\dd}{\mathrm{\mathop{d}}}
\newcommand{\GL}{\mathrm{\mathop{GL}}}
\newcommand{\g}{\mathfrak{g}}
\newcommand{\h}{\mathfrak{h}}
\newcommand{\gl}{\mathfrak{gl}}
\begin{document}

\title{Computing generators of the unit group of an integral abelian group ring}
\author{Paolo Faccin}
\address{Dipartimento di Matematica\\
Universit\`{a} di Trento\\
Italy}
\email{faccin@science.unitn.it}
\author{Willem A. de Graaf}
\address{Dipartimento di Matematica\\
Universit\`{a} di Trento\\
Italy}
\email{degraaf@science.unitn.it}
\author{Wilhelm Plesken}
\address{Lehrstuhl B f\"ur Mathematik\\
RWTH Aachen University\\
Germany}
\email{plesken@momo.math.rwth-aachen.de}
\date{}

\begin{abstract}
We describe an algorithm for obtaining generators of the unit group of the integral
group ring $\Z G$ of a finite abelian group $G$. We used our implementation in {\sc Magma} 
of this algorithm to compute the unit groups of $\Z G$ for $G$ of order up to 110.
In particular for those cases we obtained the index of the group
of Hoechsmann units in the full unit group. At the end of the paper we describe 
an algorithm for the more general problem of finding generators of an arithmetic
group corresponding to a diagonalizable algebraic group. 
\end{abstract}

\maketitle

\section{Introduction}

Let $G$ be a finite abelian group. For a ring $R$ we let $RG$ be the group ring
over $R$, consisting of sums $\sum_{g\in G} a_g g $, with $a_g\in R$. We take $R=\Z$ and
consider the unit group
$$(\Z G)^* = \{ u\in \Z G \mid \text{there is a } v\in \Z G\text{ with } vu = 1\}.$$
Higman (\cite{higman}) showed that $(\Z G)^* = \pm G \times F$, where $F$ is a free abelian 
group. Moreover, Ayoub and Ayoub (\cite{ayoub}) have established that the rank of $F$ is 
$\tfrac{1}{2}(|G|+1+t_2-2l)$, where $t_2$ is the number of elements of $G$ of order 2, and 
$l$ is the number of cyclic subgroups of $G$.

Hoechsmann (\cite{hoechs}) described a construction of a set of generators of a 
finite-index subgroup of $(\Z G)^*$, called the group of constructable units.
Regarding this construction he wrote ``Does this method
ever yield all units if $n=|G|$ is not a prime power? The answer seems to be affirmative
for all $n<74$.''  In \cite{hoechs} this question is
not dealt with any further. When $n=74$ it is known that the group of constructable
units is of index 3 in the full unit group (see \cite{hoechs2}).
So the question remains whether the constructable units generate 
the full unit group if $|G| < 74$. 

In this paper we develop algorithms for computing generators of the unit group
$(\Z G)^*$. Using our implementation of these algorithms in the computer algebra
system {\sc Magma} (\cite{magma})
we have computed the unit groups for all abelian
groups of order $\leq 110$. We found 12 groups $G$ of order less than 74 whose unit
group is not generated by the Hoechsmann units (namely, the groups of order 40, 48, 
60, 63 and 65). 

In the next section we start by collecting some well-known facts and immediate observations
concerning lattices, groups, and associative algebras. Also, in the second half of the section
(Section \ref{sec:cyc}) we describe our approach to computing the unit group of the maximal
order in a cyclotomic field. This is achieved by combining a construction by Greither
(\cite{greither}) of a finite-index subgroup of the unit group, along with a {\sc Magma}
program by Fieker for ``saturating'' a subgroup at a given prime $p$. The latter algorithm
and its implementation will be described elsewhere.

Section \ref{sec:toralunit} contains the main algorithm of this paper, namely an
algorithm for computing the unit group of an order $\Ord$ in a toral algebra $A$.
The main idea is to split $A$ in its simple ideals $e_iA$, where the $e_i$ are 
orthogonal primitive idempotents. The $e_iA$ are number fields with orders $e_i\Ord$.
So in order to compute their unit groups we can use the effective version of the 
Dirichlet unit theorem (cf. \cite{cohen1}, \cite{pohstzas}). The basic step of the
algorithm is, given two orthogonal idempotents $\epsilon_1$, $\epsilon_2$,
to obtain the unit group of $(\epsilon_1+\epsilon_2)\Ord$ given the unit groups
of $\epsilon_i\Ord$, $i=1,2$.

In Section \ref{sec:groupunits} we describe our method for obtaining generators of 
unit groups of integral abelian group rings. Its main ingredients are the construction of 
the unit groups of cyclotomic fields, and the algorithm of Section \ref{sec:toralunit}.
We comment on the running times of the implementation of the algorithm in {\sc Magma},
and we give a table containing all abelian groups of orders up to 110, where the
constructable (or Hoechsmann-) units do not generate the full unit group. 
For all these groups we give the index of the group of constructable units in the 
full unit group.

Finally in the last section we indicate an algorithm to obtain generators of the
arithmetic group corresponding to a connected diagonalizable algebraic group defined
over $\Q$. Again the main ingredient is the algorithm of Section \ref{sec:toralunit}.

In our main algorithms and implementations we make essential use of Fieker's implementation
in {\sc Magma} of an algorithm by Ge (\cite{ge_thesis}) to obtain a basis of the lattice
$$\{ (\alpha_1,\ldots,\alpha_n)\in \Z^n \mid u_1^{\alpha_1}\cdots u_n^{\alpha_n} = 1\}$$
of multiplicative relations of given elements $u_1,\ldots,u_n$ in a number field. 

{\bf Acknowledgement:} we are very grateful to Claus Fieker for giving us early access to his
{\sc Magma} programs {\sf MultiplicativeGroup} and {\sf Saturation}, without which
this project could not have been carried out.

\section{Preliminaries}

\subsection{Lattices}\label{sec:lat}

In this paper we use the term ``lattice'' for a finitely generated subgroup of $\Z^m$. 
A lattice $\Lambda\subset \Z^m$ has a basis, that is a subset
$u_1,\ldots,u_r$ such that every $u\in \Lambda$ can uniquely
be written as $u = \sum_{i=1}^r \alpha_i u_i$, with $\alpha_i\in \Z$.
The lattice $\Lambda\subset\Z^m$ is called {\em pure} if $\Z^m/\Lambda$ is torsion-free.
(See \cite{cure}, \S III.16.) 

Let $\Lambda\subset \Z^m$ be a lattice with basis $u_1,\ldots,u_r$.
We form the $r\times m$-matrix $B$ with rows consisting of the coefficients 
of the $u_i$ with respect to the standard basis of $\Z^m$. By computing the 
the Smith normal form of $B$ we can effectively compute the homomorphism
$\psi : \Z^m \to \Z^m/\Lambda$ (cf. \cite{sims}, \S 8.3). Let $T$ denote 
the torsion submodule of $\Z^m/\Lambda$. Then $\psi^{-1}(T)$ is the smallest pure lattice
containing $\Lambda$. So in particular,
the Smith normal form algorithm gives a method to compute
a basis of the lattice $V\cap \Z^m$, where $V$ is a subspace of $\Q^m$. For example,
we can compute the intersection of lattices this way.

As observed in \cite{cure}, \S III.16, a lattice $\Lambda$ is pure if and only if
it is a direct summand of $\Z^m$. So in that case, by computing a Smith normal form,
we can compute a basis of $\Z^m$ such that the first $r$ basis elements form a basis
of $\Lambda$.

\begin{rem}
Computing a basis of the lattice $V\cap \Z^m$, where $V$ is a subspace of $\Q^m$, is called
{\em saturation}. There are methods known for this, based on computing a Hermite normal form, 
that are more efficient than the approach outlined above. Going into these matters would
lead us too far from the subject of this paper.
\end{rem}

\subsection{Toral algebras}\label{sec:toral}

We say that an associative algebra $A$ over $\Q$ is {\em toral} if it is 
semisimple, abelian and has an identity element, which we will denote by $e$. 
For example the group algebra $\Q G$ of a finite abelian group $G$ is toral. 

By the Wedderburn structure theorem (cf. \cite{rsp}, \S 3.5)
a toral algebra $A$ is a direct sum $A=A_1\oplus \cdots \oplus A_s$, where the $A_i$
are ideals that are isomorphic (as associative algebras) to field extensions of $\Q$.
A nonzero element $e_0\in A$ is said to be an idempotent if $e_0^2=e_0$. 
Two idempotents $e_1,e_2$ are
called orthogonal if $e_1e_2=0$. Furthermore, an idempotent is called primitive if it is
not the sum of orthogonal idempotents. Now the decomposition of $A$ into a direct
sum of simple ideals corresponds to a decomposition of the identity element $e\in A$ 
as a sum of primitive orthogonal idempotents, $e=e_1+\cdots +e_s$. Here $e_i$ is the
identity element of $A_i$, and vice versa, $A_i = e_iA$. We remark that there are 
algorithms to compute the $e_i$, given a basis of $A$ (cf. \cite{eber2}, \cite{eber}).

When $A=\Q G$, with $G$ a finite abelian group, there is a very efficient way to 
compute the primitive idempotents. 
Let $\chi : G\to \C^*$ be an irreducible character of $G$. Then
$$e_\chi = \frac{1}{|G|} \sum_{g\in G} \chi(g^{-1}) g$$
is an idempotent in $\C G$. Moreover, the $e_\chi$, as $\chi$ runs over all irreducible 
characters, are primitive orthogonal idempotents with sum $e$ (cf. \cite{cure}, Theorem 33.8). 
In particular, they form a
basis of $\C G$. Let $m$ denote the exponent of $G$, then all irreducible characters
$\chi$ have values in the cyclotomic field $\Q(\zeta_m)$. So the Galois group
$\Gal( \Q(\zeta_m) /\Q) \cong (\Z/m \Z)^*$ acts on the irreducible characters. Now we sum the
$e_\chi$, for $\chi$ in an orbit of $\Gal( \Q(\zeta_m) /\Q)$, and obtain the primitive 
orthogonal idempotents of $\Q G$.

A subset $\Ord$ of a toral algebra $A$, containing the identity of $A$, is said to be
an {\em order} (or, more precisely, a $\Z$-order) if there is a basis $a_1,\ldots,a_m$ 
of $A$ such that $\Ord = \Z a_1 +\cdots + \Z a_m$ and $a_ia_j \in \Ord$ for $1\leq i,j\leq m$.
For example, $\Z G$ is an order in $\Q G$. The unit group of $\Ord$ is
$$\Ord^* = \{ a\in \Ord \mid \text{ there is } b\in \Ord \text{ with } ab=e\}.$$
We consider the problem of obtaining a basis of the lattice of multiplicative relations
$$L = \{ (\alpha_1,\ldots,\alpha_r)\in \Z^r \mid a_1^{\alpha_1}\cdots a_r^{\alpha_r} = e \},$$
where $a_1,\ldots,a_r$ are given elements of $\Ord^*$. 
Note that $e_i A$ are number fields. So using Ge's algorithm (\cite{ge_thesis}) we can 
compute bases of the lattices
$$L_j = \{ (\alpha_1,\ldots,\alpha_r)\in \Z^r \mid (e_ja_1)^{\alpha_1}\cdots (e_ja_r)^{\alpha_r} = e_j 
\}.$$
Moreover, $L = \cap_j L_j$ so we can compute a basis of $L$ (see Section \ref{sec:lat}).

\subsection{Standard generating sets}\label{sec:cang}

Let $U$ be a finitely-generated abelian group. We say that a set of generators 
$g_1,\ldots,g_r$ of $U$ is {\em standard} if 
\begin{enumerate}
\item for $1\leq i\leq s$ the order of $g_i$ is $d_i$,
\item for $s+1\leq i\leq r$ the order of $g_i$ is infinite,
\item $d_i | d_{i+1}$ for $i<s$,
\end{enumerate}
and there are no other relations. So a standard set of generators immediately gives an isomorphism
of $U$ to $\Z/d_1\Z \oplus \cdots \oplus\Z/d_s\Z \oplus \Z^{r-s}$.

Giving finitely-generated abelian groups by standard generating sets yields straightforward
algorithms for several computational tasks concerning these groups, such
as computing the index of a subgroup, and computing the kernel of a homomorphism.
 
If an abelian group is given by a non-standard set of generators, then we can compute a 
standard one by computing the lattice of all relations of the generators, followed by
a Smith normal form computation (cf. \cite{sims}, \S 8.3). 
So, using the algorithm indicated in the previous section,
we can compute a standard set of generators for a finitely-generated subgroup of $\Ord^*$,
where $\Ord$ is an order in a toral algebra.

\begin{rem}
For many computational problems regarding finitely-generated abelian groups it suffices to
compute a Hermite normal form of the relation lattice. However, in our applications the
main computational problem is to obtain the relation lattice (see Table \ref{tab:0}), 
the subsequent computation
of the Smith normal form does not bear heavily on the running time. Therefore, for our purposes,
a Smith normal form is the most convenient. 
\end{rem}

\subsection{Units of cyclotomic fields}\label{sec:cyc}

Let $n$ be a positive integer. We consider the cyclotomic
field $\Q(\zeta_n)$, where $\zeta_n$ is a primitive $n$-th root of unity. The ring of integers
of this field is $\Z[\zeta_n]$. By Dirichlet's unit theorem the unit group $\Z[\zeta_n]^*$
is equal to $T\times F$, where $F$ is a free abelian group of rank $\tfrac{1}{2}\varphi(n)-1$,
and $T$ is the group of roots of unity in $\Q(\zeta_n)$. 
The problem considered in this section is to obtain generators of the unit group,
$\Z[\zeta_n]^*$. There are algorithms for this that work for any number field 
(cf. \cite{cohen1}, \cite{pohstzas});
but their complexity is such that it is only practical to use them for $n$ up to about 20
(depending on the hardware one uses, of course). For this reason we sketch a different
approach, using several results from the literature. The situation is straightforward
when $n$ is a prime power, see Section \ref{sec:cyc1}. Then in Section \ref{sec:cyc2}
we describe what can be done when $n$ is not a prime power. Using these methods 
we obtained a list of the generators of the unit groups $\Z[\zeta_n]^*$, for $n<130$.
However, for several $n$ the correctness of this list depends on the Generalised
Riemann Hypothesis, that is for $n$ prime between $67$ and $127$, and for $n=
115,119,121,123,125,129$ (so 19 cases in total).

Throughout we set $\Q(\zeta_n)^+ = \R\cap \Q(\zeta_n)$; 
then $\Q(\zeta_n)^+ = \Q(\zeta_n+\zeta_n^{-1})$. 
By $h_n^+$ we denote the class number of $\Q(\zeta_n)^+$.

\subsubsection{When $n$ is a prime power}\label{sec:cyc1}

Suppose that $n=p^m$ is a prime power. For $1< a<\tfrac{n}{2}$ with $\gcd(a,p)=1$ set
$$\xi_a = \zeta_n^{\tfrac{1-a}{2}} \frac{1-\zeta_n^a}{1-\zeta_n}.$$
Then $\xi_a$ lies in the unit group of $\Q(\zeta_n)^+$. Let $U_n$ be the group generated by
$-1$, $\zeta_n$ and all $\xi_a$. Then for the index we have $[\Z[\zeta_n]^* : U_n] =h_n^+$
(this is obtained by combining Corollary 4.13, Lemma 8.1 and Theorem 8.2 
in \cite{washington}). It is known that
$h_n^+=1$ if $\varphi(n) < 66$, and assuming the Generalised Riemann Hypothesis, we 
have $h_n^+=1$ when $\varphi(n)<162$ (see the Appendix in \cite{washington}). 
So for those $n$ we have generators of the unit group.

\subsubsection{When $n$ is not a prime power}\label{sec:cyc2}

Here the situation is more difficult. First of all we assume that $n\neq 2\bmod 4$, 
as for $n= 2\bmod 4$ we have that $\Q(\zeta_n)$ and $\Q(\zeta_{\tfrac{n}{2}})$ are isomorphic.
By $E^+$ denote the unit group of $\Q(\zeta_n)^+$. 
We use a finite-index subgroup of $E^+$ defined by Greither (\cite{greither}).
Here we briefly describe his construction. 

Let $\mathcal{G} = \Gal( \Q(\zeta_n)/\Q)$, and write the elements of $\mathcal{G}$ as
$\sigma_a$, where $\gcd(a,n)=1$ and $\sigma_a(\zeta_n) = \zeta_n^a$. For $\alpha =
\sum_a m_a \sigma_a\in \Z \mathcal{G}$ and $x\in \Q(\zeta_n)$ define
$$ x^\alpha = \prod_a \sigma_a(x)^{m_a}.$$
Let $n=\prod_{i=1}^s p_i^{e_i}$ be the factorisation of $n$ in distinct prime powers. Set
$S =\{1,\ldots,s\}$ and $P_S = \{ I\subset S\mid I\neq S\}$. For $I\in P_S$ we set
$n_I = \prod_{i\in I} p_i^{e_i}$. 

We consider arbitrary maps
$\beta : S \to \Z\mathcal{G}$, which we extend to maps (denoted by the same symbol)
$\beta : P_S \to \Z\mathcal{G}$ by $\beta(\emptyset)=1$, $\beta(\{i\})=\beta(i)$, and
$\beta(I\cup J) = \beta(I)\beta(J)$ if $I\cap J = \emptyset$. 
Now let $z\in \Q(\zeta_n)$. For $I\in P_S$ set $z_I = 1-z^{n_I}$, 
and $z(\beta) = \prod_{I\in S} z_I^{\beta(I)}$. Set $t = -\sum_{I\in S} n_I \beta(I)\in 
\Z\mathcal{G}$. Then for $a$ with $1< a < \tfrac{n}{2}$ and $\gcd(a,n)=1$ we consider
$$\xi_a(\beta) = \zeta_n^{d_a} \frac{\sigma_a(z(\beta))}{z(\beta)}, \text{ where }
d_a = \frac{(1-\sigma_a)t}{2}.$$
(Note that for $n$ odd, $\zeta_n^{\tfrac{1}{2}}$ lies in $\Z[\zeta_n]$, whereas for $n$ even we
have $a$ odd and hence $\tfrac{1-a}{2}$ is an integer.)

Following Greither we describe a good choice for $\beta$. First a small piece of notation:
if $g$ is an element of order $m$ of a group, then we set $N_g = 1+g+\cdots +g^{m-1}$,
which lies in the corresponding integral group ring. Now consider an $i\in S$. Let
$\mathcal{G}_i$ denote the Galois group of $\Q(\zeta_{n/p_i^{e_i}})^+$ over $\Q$. This group contains
the Frobenius automorphism $F_i$ (by definition: $F_i(\zeta_{n/p_i^{e_i}}) = \zeta_{n/p_i^{e_i}}^{p_i}$).
This yields the element $N_{F_i}$ in $\Z\mathcal{G}_i$. Now we define $\beta(i)$ to be 
a lift of $N_{F_i}$ to $\Z \mathcal{G}$. 

Let $C_\beta$ be the subgroup of $E^+$ generated by $-1$ and the $\xi_a(\beta)$.
Greither proved that $C_\beta$ does not depend on the choice for the lifts of the 
$N_{F_i}$, and that it is of index $h_n^+ i_\beta$ in $E^+$, where 
$$i_\beta = \prod_{i=1}^s e_i^{g_i-1}f_i^{2g_i-1};$$
here $e_i$, $f_i$ and $g_i$ are respectively the ramification, inertial and decomposition
degree of $p_i$ in $\Q(\zeta_n)^+$ (so that $e_if_ig_i = \tfrac{1}{2}\varphi(n)$).

Now let $U_n$ denote the group generated by $\zeta_n$ and $C_\beta$. Then using 
\cite{washington}, Corollary 4.13, we get that $[\Z[\zeta_n]^* : U_n] = 2h_n^+i_\beta$.

We used a program that Claus Fieker has written in {\sc Magma}, V2.17-2 (see
\cite{bifie} for some of the background). This program, given 
a finite-index subgroup $V$ of $\Z[\zeta_n]^*$, and a prime $p$, computes a subgroup
$\widetilde{V}$ of $\Z[\zeta_n]^*$, containing $V$, and such that the index $[\Z[\zeta_n]^*:
\widetilde{V}]$ is not divisible by $p$. We used this starting with the group $U_n$.
For $n< 130$ and $\varphi(n)\leq 72$ we have $h_n^+=1$, and assuming the Generalised
Riemann Hypothesis, we have $h_n^+=1$ for all $n< 130$ (\cite{washington}, Appendix).
Therefore we can compute $[\Z[\zeta_n]^*:U_n]$, and get 
all primes dividing it. So in the end we arrived at the full unit group $\Z[\zeta_n]^*$ for
all $n < 130$.

\section{Unit groups of orders in toral matrix algebras}\label{sec:toralunit}

Let $A$ be a toral algebra with identity $e$, and $\Ord\subset A$ an order.
In this section we consider the problem of computing a set of generators of $\Ord^*$.
First we consider some special cases.

\subsection{A simple toral algebra}\label{sec:1ip}

Let $A$ be a simple toral algebra. In other words, it is isomorphic
to a finite extension of $\Q$. Now there are algorithms for computing generators of an order 
in a number field (the effective version
of the Dirichlet unit theorem, see \cite{cohen1}, \cite{pohstzas}). We use these to compute
generators of $\Ord^*$ as well. 

\subsection{Two idempotents}\label{sec:twoidp}

Let $A$ be a toral algebra with identity $e$, 
and $e_1,e_2\in A$ orthogonal (but not necessarily
primitive) idempotents with $e_1+e_2 = e$. Set $A_i = e_iA$, then $A=A_1\oplus A_2$.
Let $\Ord$ be an order in $A$, then $\Ord_i= e_i\Ord$ is an order in $A_i$. Here we suppose
that we have generators of $\Ord_i^*$, $i=1,2$, and the problem is to find generators of
$\Ord^*$. 

Set $J= (e_1\Ord \cap \Ord) + (e_2\Ord\cap \Ord)$. Since this is an ideal in $\Ord$ we can
form the quotient $R = \Ord/J$. Consider the maps $\varphi_i : e_i \Ord \to R$ defined
by $\varphi_i(e_i a) = a + J$, for $a\in \Ord$. (Note that this is well-defined: if $e_1a=e_1b$
then $a-b=e_2(a-b)$ so it lies in $J$.) These are surjective ring homomorphisms
with respective kernels $e_i\Ord\cap\Ord$.

\begin{lem}
$\Ord = \{ a_1 +a_2 \mid a_i \in e_i\Ord \text{ and } \varphi_1(a_1) = \varphi_2(a_2)\}.$
\end{lem}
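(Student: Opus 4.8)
The plan is to prove the asserted equality by establishing the two inclusions separately, using the facts recorded just before the statement: that each $\varphi_i : e_i\Ord \to R$ is a well-defined surjective ring homomorphism with kernel $e_i\Ord\cap\Ord$, and that $e = e_1+e_2$ with $e_1,e_2$ orthogonal. The inclusion $\subseteq$ is the routine direction. Given $a\in\Ord$, I would set $a_i = e_ia \in e_i\Ord$, so that $a = a_1+a_2$, and then read off directly from the definition of the maps that
$$ \varphi_1(a_1) = \varphi_1(e_1a) = a+J = \varphi_2(e_2a) = \varphi_2(a_2). $$
Thus every element of $\Ord$ lies in the right-hand set, decomposed through its two idempotent components.

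For the reverse inclusion $\supseteq$, where the real content lies, I would start from an element $a_1+a_2$ with $a_i\in e_i\Ord$ and $\varphi_1(a_1)=\varphi_2(a_2)$, and choose preimages $b_1,b_2\in\Ord$ with $a_i = e_ib_i$. Unwinding the definition of the $\varphi_i$, the hypothesis $\varphi_1(a_1)=\varphi_2(a_2)$ becomes $b_1+J = b_2+J$, that is, $b_1-b_2\in J$. The goal is to show that $a_1+a_2 = e_1b_1+e_2b_2$ lies in $\Ord$, and the key algebraic step is the identity
$$ e_1b_1 + e_2b_2 = e_1(b_1-b_2) + b_2, $$
obtained by adding and subtracting $e_1b_2$ and invoking $e_1+e_2 = e$.

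The main obstacle is to see that the correction term $e_1(b_1-b_2)$ actually lies in $\Ord$ and not merely in the ambient ideal $e_1A$; once this is granted, the right-hand side of the identity is a sum of two elements of $\Ord$ (using $b_2\in\Ord$) and we are finished. To clear this obstacle I would exploit the explicit shape of $J$: writing $b_1-b_2 = u_1+u_2$ with $u_i\in e_i\Ord\cap\Ord$, and using $e_1u_1 = u_1$ together with the orthogonality $e_1u_2 = 0$, one obtains $e_1(b_1-b_2) = u_1 \in e_1\Ord\cap\Ord\subseteq\Ord$. This is exactly the point at which the definition $J = (e_1\Ord\cap\Ord)+(e_2\Ord\cap\Ord)$ is essential, since the summand structure is what keeps the $e_1$-projection of an element of $J$ inside $\Ord$. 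I would also note, to justify the choices, that a different preimage $b_1$ alters $b_1$ only by an element of $e_2\Ord\cap\Ord\subseteq J$, so that the class $b_1+J$, and hence the hypothesis, is independent of the representatives selected.
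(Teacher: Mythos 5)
Your proposal is correct and follows essentially the same route as the paper's proof: both directions are handled identically, with the reverse inclusion resting on writing $b_1-b_2=u_1+u_2$ with $u_i\in e_i\Ord\cap\Ord$ and using $e_1u_1=u_1$, $e_1u_2=0$ to conclude $a_1+a_2=u_1+b_2\in\Ord$, which is literally the paper's computation $a_1+a_2=b+u_1$ in different notation. The extra remark on independence of the chosen preimages is fine but not needed, since well-definedness of the $\varphi_i$ was already established before the lemma.
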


\begin{proof}
Let $a\in \Ord$ and set $a_i = e_ia$ then $a=a_1+a_2$ and $a_i\in e_i\Ord$. Moreover
$\varphi_1(a_1) = a+J = \varphi_2(a_2)$. Conversely, let $a,b\in \Ord$ be such that
$\varphi_1(a_1) = \varphi_2(a_2)$, where $a_1=e_1a$ and  $a_2 =e_2b$. Then $a-b\in J$, whence
$a=b+u_1+u_2$ where $u_i \in e_i\Ord \cap \Ord$. Therefore $e_1a = e_1 b+u_1$ so that
$a_1+a_2=e_1a+e_2b=e_1b+e_2b+u_1=b+u_1$ which lies in $\Ord$. 
\end{proof}

\begin{cor}
$\Ord^* = \{ a_1 +a_2 \mid a_i \in (e_i\Ord)^* \text{ and } \varphi_1(a_1) = \varphi_2(a_2)\}.$
\end{cor}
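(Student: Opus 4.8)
The plan is to deduce the corollary from the preceding lemma by checking that the description it gives is multiplicatively closed and that membership in $\Ord^*$ forces the summands to be units. First I would use the lemma to rewrite $\Ord$ as the fibre product of $e_1\Ord$ and $e_2\Ord$ over $R$ via $\varphi_1,\varphi_2$. Since multiplication in $A$ respects the direct sum decomposition $A=A_1\oplus A_2$, an element $a=a_1+a_2$ with $a_i\in e_i\Ord$ is a unit of $\Ord$ precisely when there is $b=b_1+b_2\in\Ord$ with $ab=e=e_1+e_2$; componentwise this says $a_ib_i=e_i$ in $e_i\Ord$. So I would argue that $a\in\Ord^*$ is equivalent to $a_i\in(e_i\Ord)^*$ for $i=1,2$.

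The two inclusions then go as follows. For the forward inclusion, take $a\in\Ord^*$ and write $a=a_1+a_2$ with $a_i=e_ia$ as in the lemma; the inverse $b\in\Ord$ likewise splits as $b=b_1+b_2$, and multiplying $ab=e$ by $e_i$ shows $a_ib_i=e_i$, so each $a_i$ is a unit in $e_i\Ord$. The condition $\varphi_1(a_1)=\varphi_2(a_2)$ is inherited directly from the lemma's characterisation of $\Ord$. For the reverse inclusion, suppose $a_i\in(e_i\Ord)^*$ with $\varphi_1(a_1)=\varphi_2(a_2)$. By the lemma, $a=a_1+a_2\in\Ord$. Let $b_i\in e_i\Ord$ be the inverse of $a_i$ in $e_i\Ord$; the main point is to see that $b=b_1+b_2$ again lies in $\Ord$, which by the lemma reduces to verifying $\varphi_1(b_1)=\varphi_2(b_2)$. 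Since the $\varphi_i$ are ring homomorphisms with $\varphi_i(a_i)=\varphi_i(e_i)=1_R$ (the identity of $R$), applying $\varphi_i$ to $a_ib_i=e_i$ gives $\varphi_i(b_i)=1_R$ for both $i$, so the fibre-product condition holds and $b\in\Ord$. Then $ab=a_1b_1+a_2b_2=e_1+e_2=e$, so $a\in\Ord^*$.

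The one step that needs care, and which I expect to be the only real obstacle, is confirming that the inverse $b$ constructed componentwise lands back in $\Ord$ rather than merely in $e_1\Ord\oplus e_2\Ord$. This is exactly where the fibre-product description from the lemma is essential: without the compatibility $\varphi_1(b_1)=\varphi_2(b_2)$ one would only know $b\in e_1\Ord+e_2\Ord$, which can be strictly larger than $\Ord$. The computation that both $\varphi_i(b_i)$ equal the identity of $R$ is what closes this gap cleanly, and it relies on $\varphi_i$ being a \emph{ring} homomorphism sending $e_i$ to the identity of $R$, a fact already recorded when the $\varphi_i$ were introduced.
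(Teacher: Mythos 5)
Your overall strategy is the right one---and it is essentially the argument the paper leaves implicit, since the corollary is stated there without proof as an immediate consequence of the lemma: split $a$ and its inverse componentwise for the forward inclusion, and for the reverse inclusion build the inverse componentwise and use the lemma's fibre-product description of $\Ord$ to check that it lands back in $\Ord$. The forward direction as you wrote it is correct. But the key step of the reverse direction rests on a false claim: you assert $\varphi_i(a_i)=\varphi_i(e_i)=1_R$ for a unit $a_i\in(e_i\Ord)^*$, and from this deduce $\varphi_i(b_i)=1_R$. Neither equality holds in general. By definition $\varphi_i(e_ia)=a+J$, so $\varphi_i(a_i)=1_R$ would require $a-e\in J$; a unit of $e_i\Ord$ maps to a \emph{unit} of $R$, not to the identity. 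For instance, take $\Ord=\Z C_4$ with $C_4=\langle g\rangle$, $e_1=\tfrac{1}{2}(1+g^2)$, $e_2=\tfrac{1}{2}(1-g^2)$: the unit $a_2=e_2g$ of $e_2\Ord\cong\Z[i]$ has $\varphi_2(a_2)=g+J\neq 1+J$. Indeed, if your claim were true, the compatibility condition in the corollary would be vacuous on units, the corollary would read $\Ord^*\cong(e_1\Ord)^*\times(e_2\Ord)^*$, and Steps (3)--(5) of the paper's algorithm (computing $H_i=\varphi_i(\Ord_i^*)$, $H=H_1\cap H_2$ and the relation lattice $\Lambda$) would be pointless.

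The gap is easy to close, and the correct argument uses exactly the hypothesis you never invoked, namely $\varphi_1(a_1)=\varphi_2(a_2)$. Applying the ring homomorphism $\varphi_i$ to $a_ib_i=e_i$ gives $\varphi_i(a_i)\varphi_i(b_i)=\varphi_i(e_i)=1_R$, i.e.\ $\varphi_i(b_i)=\varphi_i(a_i)^{-1}$ in $R^*$. Since $\varphi_1(a_1)=\varphi_2(a_2)$, their inverses in $R^*$ agree, so $\varphi_1(b_1)=\varphi_2(b_2)$, and the lemma then gives $b=b_1+b_2\in\Ord$. The rest of your argument (the cross terms $a_1b_2$ and $a_2b_1$ vanish by orthogonality of $e_1,e_2$, hence $ab=e_1+e_2=e$) goes through unchanged.
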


So in order to compute generators of $\Ord^*$ we perform the following steps:
\begin{enumerate}
\item Compute bases of $\Ord_i = e_i\Ord$.
\item Compute bases of $\Ord_i\cap \Ord$, of $J = (\Ord_1\cap \Ord) + (\Ord_2\cap \Ord)$,
and set $R=\Ord/J$.
\item Compute generators of the groups $H_i = \varphi_i(\Ord_i^*) \subset R^*$ and
$H = H_1\cap H_2$.
\item Compute generators of the groups $M_i = \varphi_i^{-1}(H)\subset \Ord_i^*$.
\item Compute generators of the group $\Ord^* = \{ a_1+a_2 \mid a_i\in M_i \text{ and }
\varphi_1(a_1)=\varphi_2(a_2)\}$. 
\end{enumerate}

\subsubsection{Implementation}

We comment on the implementation of the steps of the algorithm. 
Step (1) is done by a Hermite normal form computation. The intersections in Step
(2) are computed using the techniques indicated in Section \ref{sec:lat}. 
Note that a basis of $J$ is obtained
by concatenating the bases of $\Ord_i\cap \Ord$. The ring $R=\Ord/J$ can be constructed
by a Smith normal form computation.

For Step (3) we assume that $e_2 A$ is isomorphic to a number field (in other words, that
$e_2$ is a primitive idempotent). When using the algorithm, this can always be arranged
(see Section \ref{sec:genc}). 
Then $e_2 \Ord$ an order in it. We set $I = e_2\Ord \cap \Ord$ and
use the isomorphism $R\cong (e_2 \Ord) / I$. Subsequently we use algorithms described in 
\cite{hpp}, \cite{klupau} to compute a standard generating set of $(e_2 \Ord / I)^*$.
So computing $H_1$, $H_2$ as subgroups of $(e_2 \Ord / I)^*$ we can perform the operations
of Step (3).

In Step (4) we view $\varphi_i$ as a homomorphism $\Ord_i^*\to H_i$. We use this to compute
generators of the kernel of $\varphi_i$ as well as pre-images of the generators of $H$.
Together these generate the group $M_i$. As remarked in Section \ref{sec:cang}, we can compute
standard generating sets of the groups $\Ord_i^*$. Using these,
it is straightforward to obtain a standard generating
set for the subgroups $M_i$. 
Then we restrict $\varphi_i$ to obtain a homomorphism $\varphi_i : M_i\to H$.

Now we come to Step (5). Let $h_1,\ldots,h_r$, $a_1,\ldots,a_s$, $b_1,\ldots,b_t$ be standard
generating sets of $H$, $M_1$ and $M_2$ respectively. Set
$$\Lambda = \{ (\alpha_1,\ldots,\alpha_s,\beta_1,\ldots,\beta_t)\in \Z^{s+t} \mid 
\varphi_1(a_1^{\alpha_1}\cdots a_s^{\alpha_s}) = \varphi_2(b_1^{\beta_1}\cdots \beta_t^{e_t})\}.$$
Then 
$$\Ord^* = \{ a_1^{\alpha_1}\cdots a_s^{\alpha_s} + b_1^{\beta_1}\cdots b_t^{\beta_t} \mid 
(\alpha_1,\ldots,\alpha_s,\beta_1,\ldots,\beta_t)\in\Lambda\}.$$
Moreover, $\Lambda$ is a lattice, hence has a finite basis. Furthermore,
the elements of $\Ord^*$ corresponding to the elements of a basis of $\Lambda$ generate
$\Ord^*$. So the problem of finding a generating set of $\Ord^*$ is reduced to finding a basis
of $\Lambda$.

Define $\mu_{ij},\nu_{ij}\in \Z$ by 
\begin{align*}
\varphi_1(a_i) &= \prod_{j=1}^r h_j^{\mu_{ij}}\\
\varphi_2(b_i) &= \prod_{j=1}^r h_j^{\nu_{ij}}.
\end{align*}
A small calculation shows that $(\alpha_1,\ldots,\alpha_s,\beta_1,\ldots,\beta_t)\in \Lambda$ 
if and only if
\begin{equation}\label{eq:1}
\sum_{i=1}^s \mu_{ij}\alpha_i -\sum_{k=1}^t \nu_{kj}\beta_k = 0 \bmod \ord(h_j)
\end{equation}
for $1\leq j\leq r$ (and where $\ord(h_j)$ denotes the order of $h_j$).
Let $S$ be the integral matrix with columns
$$(\mu_{1j},\ldots,\mu_{sj},-\nu_{1j},\ldots,-\nu_{tj},\ord(h_j))$$
for $1\leq j\leq r$. We compute a basis the integral kernel of $S$, which is the set of
all $v\in \Z^{s+t+1}$ such that $vS=0$. For each $v$ in this basis we take the vector
consisting of the first $s+t$ coordinates. This way we obtain a basis of $\Lambda$.

\subsection{The general case}\label{sec:genc}

Now let $A$ be a toral algebra, $e_1,\ldots,e_m$ its primitive orthogonal 
idempotents with sum $e$, and $A_i = e_iA$ the corresponding simple ideals. Let $\Ord$ be an
order in $A$; then $e_i\Ord$ is an order in $A_i$, and as indicated in Section \ref{sec:1ip},
we can compute generators of the unit groups $(e_i\Ord)^*$. Then for $j=2,3,\ldots$ we set 
$\epsilon_j = e_1+\cdots +e_j$ and we apply the algorithm of Section 
\ref{sec:twoidp} to the algebra $\epsilon_jA$ with its order $\epsilon_j \Ord$,
and two idempotents $\epsilon_{j-1}$ and $e_j$, yielding the unit group $(\epsilon_j\Ord)^*$.
When the algorithm terminates we have the unit group $\Ord^*$.

\section{Units of integral abelian group rings}\label{sec:groupunits}

Let $G$ be an abelian group. 
As seen in Section \ref{sec:toral}, it is straightforward to compute primitive
orthogonal idempotents $e_1,\ldots,e_r\in \Q G$ such that
$e_i(\Q G)$ is isomorphic to a field extension of $\Q$. Let $m$ denote the exponent of $G$,
then (cf. \cite{ayoub})
$$ \Q G \cong \bigoplus_{d|m} \bigoplus_{i=1}^{t_d} \Q(\zeta_d),$$
where $\Q(\zeta_d)$ is the cyclotomic field of order $d$, and $t_d$ is the number of cyclic
subgroups of $G$ of order $d$. In particular, $e_i(\Q G)\cong \Q(\zeta_{d_i})$. 
We consider the order
$\Z G$ in $\Q G$. We have that $e_i(\Z G)$ is isomorphic to $\Z[\zeta_{d_i}]$,
and by the results of Section \ref{sec:cyc}, we have generators of $\Z[\zeta_{d_i}]^*$
for $d_i < 130$. 
So for small groups $G$ we can apply the algorithm of Section \ref{sec:toralunit}
to obtain generators of the unit group $(\Z G)^*$. Using our implementation of 
the algorithms in {\sc Magma}, we have carried this out for all abelian groups of
orders up to $110$. In Table \ref{tab:0} we collect some timings and other data
related to the algorithm.

\begin{table}[htb]\label{tab:0}
\begin{tabular}{|l|r|r|r|r|}
\hline
$G$ & $\varphi(\exp(G))$ & $|\mathrm{digits}|$ &
$t_m$ & $t_{\mathrm{tot}}$ \\
\hline
$C_{70}$ & 24 & 7.4 & 314 & 328 \\
$C_{80}$ & 32 & 63.4 & 1131 & 1183 \\
$C_{90}$ & 24 & 159.5 & 1043 & 1078\\
$C_{91}$ & 72 & 3.7 & 2352 & 2446 \\
$C_{96}$ & 32 & 31.2 & 2322 & 2373 \\
$C_2\times C_{48}$ & 16 & 181.3 & 1575 & 1781 \\
$C_2\times C_2\times C_{24}$ & 8 & 54.6 & 1031 & 1267\\
$C_2\times C_4\times C_{12}$ & 4 & 22.5 & 537 & 725 \\
$C_{100}$ & 40 & 217.6 & 3822 & 3942 \\
$C_2\times C_{50}$ & 20 & 66352.6 & 414569 & 426654\\
$C_5\times C_{20}$ & 8 & 379.7 & 1227 & 1425\\
$C_{10}\times C_{10}$ & 4 & 275.2 & 1131 & 1332\\
\hline
\end{tabular}
\caption{Runtimes (in seconds) for the algorithm to compute generators of $(\Z G)^*$. 
The first column lists the isomorphism type of $G$, and the second the value of the Euler 
$\varphi$-function on the exponent of $G$. The
third column has the average number of digits of the coefficients of the units output
by the algorithm (with respect to the standard basis of $\Z G$).
The
fourth column displays the time spent to compute multiplicative relations in cyclotomic fields. 
The last column has the total time spent by the algorithm.}
\end{table}

From Table \ref{tab:0} we see that the running time is dominated by the time needed to compute
multiplicative relations in cyclotomic fields (Ge's algorithm). This algorithm needs to work
harder if the degrees of the fields that occur are higher. Indeed, the running times generally 
increase when $\varphi(\exp(G))$ increases (note that this is the highest degree of a 
cyclotomic field occurring in the decomposition of $\Q G$). However, also the size of the
elements of which we need to compute multiplicative relations plays a role. For some groups
the average number of digits of a unit, as output by the algorithm, is very high. This is
seen most dramatically for $C_2\times C_{50}$. Note also that the size (i.e., the average
number of digits of their coefficients) of the units output by the algorithm is far
from being optimal; indeed, for $G=C_2\times C_{50}$, the unit group $(\Z G)^*$ is also
generated by the Hoechsmann units (see below).

Various constructions of finite index subgroups of $(\Z G)^*$ have appeared in the 
literature (see \cite{sehgal_units} for an overview). Among these a construction by
Hoechsmann (\cite{hoechs}) seems to yield subgroups of particularly small index.
As an application of our algorithm, we have compared the full unit groups with the 
groups of Hoechsmann units, for $G$ of size up to $110$.

We briefly describe Hoechsmann's construction.
First, let $C$ be a cyclic group of order $n$, generated
by the element $x$. For $i\geq 0$ and $y\in C$ we set
$$s_i (y) = 1+y+\cdots +y^{i-1}.$$
Let $i,j$ be integers with $0 < i,j < n$ and $\gcd(i,n)=\gcd(j,n)=1$. Let $k,l$ be positive
integers with $li = 1 +kn$. Set
$$u_{i,j}(x) = s_l(x^i)s_i(x^j)-ks_n(x).$$
Then $u_{i,j}(x)$ is a unit in $(\Z C)^*$. Let $\Theta(C)$ denote the set of all units
constructed in this way. We let $\mathcal{H}$ be the group of units in $(\Z G)^*$ generated by
all $\Theta(C)$, where $C$ ranges over the cyclic subgroups of $G$ of order $>2$,
along with $\pm G$. It is called the group of {\em constructable units} of $\Z G$.
By \cite{hoechs}, Theorem 2.5, $\mathcal{H}$ is a subgroup of finite index
of $(\Z G)^*$. 

In general the set consisting of the $u_{i,j}(x)$ is a heavily redundant generating set. 
We note, however, that \cite{hoechs2} Theorem 1.1, gives a non-redundant generating set
of $\mathcal{H}$, if the group $H_m = (\Z /m\Z)^*/\{\pm 1\}$ is cyclic, where $m$ is the
exponent of $G$. We remark that for small $m$ the group
$H_m$ is cyclic with only few exceptions. The values of $m$ up to 120, for which 
$H_m$ is not cyclic, are 24, 40, 48, 56, 60, 63, 65, 72, 80, 84, 85, 88, 91, 96, 104,
105, 112, 117, 120.

We let $\Hind(G)$ be the index of $\mathcal{H}$ in $(\Z G)^*$, and we call this number
the Hoechsmann index. We have computed the Hoechsmann indices for all abelian groups
of orders up to 110.
For most groups the index is 1. The groups for which it is not
1 are listed in Table \ref{tab:1}, along with the corresponding
Hoechsmann indices. 

From Table \ref{tab:1} we see that 
on many occasions when $\Hind(G)\neq 1$ we have that $|G|=m$, with $H_m$ not cyclic.
Also, for the groups considered,
if for one group $G$ of order $m$ we have $\Hind(G)\neq 1$, then the same holds
for all groups of that order (except when $(\Z G)^*$ has rank 0).

\begin{table}[htb]\label{tab:1}
\begin{tabular}{|l|c|l|c|}
\hline
$G$ & $\Hind(G)$ & $G$ & $\Hind(G)$ \\
\hline
$C_{40}$ & 2 & $C_{84}$ &  2 \\   
$C_{2}\times C_{20}$ & 2 & $C_2\times C_{42}$ & 2\\  
$C_2\times C_2\times C_{10}$ & 2 & $C_{85}$ & 2 \\  
$C_{48}$ & 2 & $C_{90}$ & 3 \\
$C_2\times C_{24}$ & 2 & $C_3\times C_{30}$ & 3\\
$C_2\times C_2\times C_{12}$ & 2 & $C_{91}$ & 3 \\
$C_4\times C_{12}$ & 4 & $C_{96}$ & 8\\
$C_{60}$ & 2 & $C_2\times C_{48}$ & 16\\  
$C_2\times C_{30}$ & 2 & $C_2\times C_2\times C_{24}$ & 32  \\
$C_{63}$ & 3 & $C_2\times C_2\times C_2\times C_{12}$ & 16 \\ 
$C_3\times C_{21}$ & 3 & $C_2\times C_4\times C_{12}$ & 64\\ 
$C_{65}$ & 2 & $C_4\times C_{24}$ & 64 \\ 
$C_{74}$  & 3 & $C_{98}$ & 7\\
$C_{80}$ & 4  & $C_7\times C_{14}$ & 343 \\  
$C_2\times C_{40}$ & 8 & $C_{104}$ & 2\\
$C_2\times C_2\times C_{20}$ &  16 & $C_2\times C_{52}$ & 2\\ 
$C_2\times C_2\times C_2\times C_{10}$ &  32 &  $C_2\times C_2\times C_{26}$ &  2\\
$C_4\times C_{20}$ &  8 & $C_{105}$ & 4 \\ 
\hline
\end{tabular}
\caption{Hoechsmann indices for abelian groups of orders up to 110, for which this
index is not 1.}
\end{table}

\begin{rem}
For the groups $C_p$, with $p$ a prime between 67 and 120 the
correctness of our computation depends on the Generalised Riemann Hypothesis (see
Section \ref{sec:cyc}).
\end{rem}

\section{Computing generators of arithmetic groups of diagonalisable algebraic groups}

In this section we use the term {\em D-group} as short for 
for diagonalisable algebraic subgroup of $\GL(n,\C)$, defined over $\Q$. 
We consider the problem to compute generators of the arithmetic group $G(\Z) = 
G\cap \GL(n,\Z)$ for a given connected D-group $G$. We assume that the group $G$ is 
given by its Lie algebra $\g\subset \gl(n,\C)$ which in turn is given by a basis
of matrices with coefficients in $\Q$. In the sequel such a basis will be called 
a {\em $\Q$-basis}.

We have two subsections: the first is devoted to characters, and in the second
we outline our algorithm.

\subsection{Computing the characters}\label{character}

We recall that a character of a group $H$ is a homomorphism $\chi : H\to F^*$, 
where $F^*$ denotes the multiplicative group of nonzero elements of a field $F$.
In this section we let $G\subset H\subset \GL_m(\C)$ be two connected 
D-groups defined
over $\Q$. A well-known theorem (cf. \cite{borel}, Proposition 8.2)
states that there are characters $\chi_1,\ldots,\chi_t : H\to \C^*$ such that
$G$ is the intersection of their kernels, i.e.,
$$G = \{ h\in H \mid \chi_i(h) = 1 \text{ for } 1\leq i\leq s\}.$$

The problem considered in this section is to find a number field $K$,
and characters $\chi_1,\ldots,\chi_t : H\to K^*$ such that $G$ is the
intersection of their kernels. We require that $K$ and the $\chi_i$ be
explicitly given, i.e., that we know the minimum polynomial of a primitive
element of $K$, and that for a given $h\in H$ we can effectively compute
$\chi_i(h)$. We assume that we are given the Lie algebras $\h,\g$ of $H$
and $G$ respectively, in terms of $\Q$-bases.

\begin{prop}\label{lem:char1}
Let $X\in \GL_m(\C)$ be such that $\widetilde{H}=XHX^{-1}$ consists of 
diagonal matrices. Set $\tilde\h = X\h X^{-1}$, $\tilde \g = X\g X^{-1}$. 
Then $\tilde\h, \tilde\g$ consist of diagonal matrices as well. Set
$$\Lambda(\tilde\h) = \{ (e_1,\ldots,e_m)\in \Z^m \mid e_1\alpha_1+\cdots
+e_m\alpha_m = 0 \text{ for all } \diag(\alpha_1,\ldots,\alpha_m)\in 
\tilde\h\},$$
and similarly define $\Lambda(\tilde\g)$. Then $\Lambda(\tilde\h) \subset
\Lambda(\tilde\g)\subset \Z^m$ are pure lattices 
(cf. Section \ref{sec:lat}). Let  $\underline e_i =(e_{i,1},\ldots,
e_{i,m})\in \Z^m$, for $1\leq i\leq s$, be a basis of $\Lambda(\tilde \g)$
such that $\underline e_{t+1},\ldots,\underline e_s$ is a basis of
$\Lambda(\tilde \h)$. 
For $1\leq i\leq t$ define
$\chi_i : H\to \C^*$ by $\chi_i(h) = \alpha_1^{e_{i,1}}\cdots \alpha_m^{e_{i,m}}$, 
where
$XhX^{-1} = \diag(\alpha_1,\ldots,\alpha_m)$. Then $G$, as subgroup of $H$, 
is the intersection of the kernels of the $\chi_i$.
\end{prop}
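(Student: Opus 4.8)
The plan is to conjugate everything into the diagonal torus and then invoke the standard duality between subtori and saturated sublattices of their character groups. First I would work inside $\widetilde{H} = XHX^{-1}$, noting that conjugation by $X$ is an isomorphism of algebraic groups carrying $G$ to $\widetilde{G} = XGX^{-1}$ and carrying each $\chi_i$ to the character $\chi_{\underline e_i} : \widetilde{H}\to \C^*$, $\diag(\alpha_1,\ldots,\alpha_m)\mapsto \alpha_1^{e_{i,1}}\cdots\alpha_m^{e_{i,m}}$. Since $G,H$ are connected and diagonalizable, over $\C$ they are tori, so $\widetilde{H}\subseteq \widetilde{T} := (\C^*)^m$ and $\widetilde{G}\subseteq\widetilde{T}$ are subtori; it suffices to prove $\widetilde{G} = \{h\in \widetilde{H} \mid \chi_{\underline e_i}(h)=1,\ 1\le i\le t\}$ and then transport the statement back along $X$.

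The key step is to identify the lattices $\tilde\h$-annihilator $\Lambda(\tilde\h)$ and $\Lambda(\tilde\g)$ with the \emph{vanishing lattices} of $\widetilde{H}$ and $\widetilde{G}$. For $c=(c_1,\ldots,c_m)\in\Z^m$ write $\chi_c$ for the character $\diag(\alpha_j)\mapsto\prod_j\alpha_j^{c_j}$ of $\widetilde{T}$; a direct computation gives $\dd\chi_c(\diag(\alpha_1,\ldots,\alpha_m))=\sum_j c_j\alpha_j$. I would then use the characteristic-zero fact that for a connected subtorus $S\subseteq\widetilde{T}$ with Lie algebra $\mathfrak{s}$ one has $\chi_c|_S=1$ iff $\dd\chi_c|_{\mathfrak{s}}=0$: the implication $(\Rightarrow)$ is immediate by differentiation, and for $(\Leftarrow)$ one uses $\operatorname{Lie}(\ker(\chi_c|_S))=\ker(\dd\chi_c|_{\mathfrak{s}})$, valid in characteristic $0$, so $\dd\chi_c|_{\mathfrak{s}}=0$ forces $\ker(\chi_c|_S)$ to have the full dimension of the connected group $S$, hence $\chi_c|_S=1$. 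Applied to $S=\widetilde{H}$ (with $\operatorname{Lie}(\widetilde{H})=\tilde\h$) and $S=\widetilde{G}$ (with $\operatorname{Lie}(\widetilde{G})=\tilde\g$) this yields exactly
$$\{c\in\Z^m \mid \chi_c|_{\widetilde{H}}=1\}=\Lambda(\tilde\h),\qquad \{c\in\Z^m\mid \chi_c|_{\widetilde{G}}=1\}=\Lambda(\tilde\g).$$
In particular, since $X^*(\widetilde{H})=\Z^m/\Lambda(\tilde\h)$ and $X^*(\widetilde{G})=\Z^m/\Lambda(\tilde\g)$ are free (being character groups of tori), the lattices are pure, and $\tilde\g\subseteq\tilde\h$ gives $\Lambda(\tilde\h)\subseteq\Lambda(\tilde\g)$; this is the assertion made just before the definition of the $\chi_i$.

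Finally I would conclude by the torus--lattice duality: a subtorus of $\widetilde{T}$ is recovered from its vanishing lattice $M$ as $\{t\in\widetilde{T}\mid \chi_c(t)=1\ \forall c\in M\}$ (cf. \cite{borel}). For the inclusion $\subseteq$, every $h\in\widetilde{G}$ satisfies $\chi_{\underline e_i}(h)=1$ because $\underline e_i\in\Lambda(\tilde\g)$ for $i\le t$. Conversely, given $h\in\widetilde{H}$ with $\chi_{\underline e_i}(h)=1$ for $1\le i\le t$, the remaining generators $\underline e_{t+1},\ldots,\underline e_s$ of $\Lambda(\tilde\g)$ lie in $\Lambda(\tilde\h)$ and hence vanish on all of $\widetilde{H}\ni h$; as $\underline e_1,\ldots,\underline e_s$ is a basis of $\Lambda(\tilde\g)$, multiplicativity gives $\chi_c(h)=1$ for every $c\in\Lambda(\tilde\g)$, whence $h\in\widetilde{G}$ by the duality. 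Transporting back along conjugation by $X$ then gives $G=\bigcap_{i=1}^t\ker\chi_i$ as a subgroup of $H$. The main obstacle, and the only place where more than bookkeeping is needed, is the characteristic-zero identity $\operatorname{Lie}(\ker(\chi_c|_S))=\ker(\dd\chi_c|_{\mathfrak{s}})$ used to pass between the Lie-algebra description of the $\Lambda$'s and the group-theoretic vanishing lattices; everything else is the standard anti-equivalence between diagonalizable groups and their character lattices.
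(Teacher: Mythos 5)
Your proof is correct and follows essentially the same route as the paper: conjugate into the diagonal torus, use the extension of characters of diagonalizable groups (Borel, Prop.\ 8.2) to see that all characters are monomial, and exploit the characteristic-zero correspondence between connected subgroups of the torus and the pure lattices annihilating their Lie algebras. The only organizational difference is that you justify the group--lattice dictionary via $\operatorname{Lie}(\ker\phi)=\ker(\dd\phi)$ plus the duality recovery of a subtorus from its vanishing lattice, whereas the paper observes that the subgroup cut out by the characters from $\Lambda(\tilde\h)$ is a connected group with Lie algebra $\tilde\h$ and hence equals $\widetilde{H}$ --- the same facts deployed in a slightly different order.
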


\begin{proof}
Note that $\tilde{\h}$ is the Lie algebra of $\widetilde{H}$. Therefore,
it consists of diagonal matrices. Since $\tilde\g \subset \tilde \h$
we get the same property for $\tilde\g$.
The lattices $\Lambda(\tilde\h)$, $\Lambda(\tilde\g)$ are
obviously pure. So as noted in Section \ref{sec:lat}, the required basis 
of $\Lambda(\tilde \g)$ exists.

A character $\widetilde\chi$ of $\widetilde{H}$ is given by a sequence 
$(e_1,\ldots,e_m)$ of integers such that $\widetilde\chi(\diag(\alpha_1,
\ldots,\alpha_m)) = \alpha_1^{e_1}\cdots \alpha_m^{e_m}$. 
This follows from the same statement 
for the full diagonal group $D_m$, along with the fact that a character of 
$\widetilde{H}$ extends to a character of $D_m$ (\cite{borel}, Proposition 8.2).
Furthermore, the differential of $\widetilde\chi$ is given by 
$\dd \widetilde\chi (\diag(\alpha_1,\ldots,\alpha_m)) = e_1\alpha_1+\cdots 
+e_m \alpha_m$. So since $\tilde \h$ consists of all $\diag(\alpha_1,\ldots,
\alpha_m)$ with $e_1\alpha_1+\cdots +e_m\alpha_m=0$ for all $(e_1,\ldots,e_m)
\in \Lambda(\tilde \h)$ we get that 
$$\widetilde{H} = \{ \diag(\alpha_1,\ldots,\alpha_m) \mid \alpha_1^{e_1}\cdots 
\alpha_m^{e_m}=1 \text{ for all } (e_1,\ldots,e_m)\in \Lambda(\tilde \h)\}.$$
Indeed, the latter is a connected algebraic group with the same Lie algebra
as $H$. An analogous statement holds for $\widetilde{G} = XGX^{-1}$ and
$\tilde\g$. This implies the statement of the proposition.
\end{proof}

So in order to solve the problem of this section  we need to say how the
various objects in Proposition \ref{lem:char1} can be computed. 

Let $B\subset M_m(\Q)$ be the associative algebra with one generated by
the basis elements of $\h$. 
Then, using the terminology of Section \ref{sec:toral}, $B$ is
toral (indeed: its elements can simultaneously be diagonalized). Now using algorithms
given in for example \cite{eber3}, \cite{grf_ivany} we can compute a {\em splitting element}
$b_0\in B$. This means that $b_0$ generates $B$ (as associative algebra).
By repeatedly factoring polynomials over number fields, we
construct the splitting field $K$ of the minimal
polynomial of $b_0$ over $\Q$. After factoring this polynomial over $K$ we
find the eigenvalues and eigenvectors of $b_0$. From this we get a matrix
$X\in \GL_m(K)$ such that $Xb_0X^{-1}$ is diagonal. We note that this also
implies that $X\h X^{-1}$ consists of diagonal matrices. As this is the
Lie algebra of $\widetilde{H} = XHX^{-1}$, we get the same for $\widetilde{H}$,
as the latter group is connected.

Since we have given a basis of $\h$ we can compute a basis of $X\h X^{-1}$.
Corresponding to a basis element $\diag(\alpha_1,\ldots,\alpha_m)$ 
we construct the linear equation $\alpha_1x_1+\cdots \alpha_mx_m=0$ 
in the unknowns $x_i$. We can express the $\alpha_i$ as vectors 
in $\Q^{d}$, where $d$ is the degree of $K$. So by solving these
equations we get a basis of the space 
$$V(\tilde\h) = \{ (a_1,\ldots,a_m)\in 
\Q^m\mid a_1\alpha_1+\cdots +a_m\alpha_m=0 \text{ for all } \diag(\alpha_1,
\ldots ,\alpha_m)\in \tilde\h\}. $$
Now $\Lambda(\tilde\h) = V(\tilde\h)\cap \Z^m$, so as seen in Section \ref{sec:lat},
we can compute a basis of $\Lambda(\tilde\h)$. Furthermore, using the methods
outlined in the same section, we get a basis of $\Lambda(\tilde\g)$ that contains a basis
of $\Lambda(\tilde\h)$. Then the characters $\chi_i$ are 
easily constructed as in Proposition \ref{lem:char1}.

\subsection{Putting the pieces together}\label{sec:together}

Now let $G\subset \GL_m(\C)$ be a connected D-group, defined over $\Q$.
We assume that we have given its Lie algebra $\g \subset \gl_m(\C)$ by a 
$\Q$-basis.
We describe an algorithm for obtaining a finite set of
generators of the group $G(\Z) = G\cap \GL_m(\Z)$. 

\begin{lem}\label{lem:fin1}
Let $A\subset M_m(\C)$ be the associative algebra with one generated by the
basis elements of $\g$. Let $A^*$ denote the set of
invertible elements of $A$. Then $A^*$ is a D-group defined over $\Q$ with
Lie algebra $A$. Furthermore, $G\subset A^*$.
\end{lem}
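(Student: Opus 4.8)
The plan is to put everything in diagonal form first, and then read off the three assertions. Since $G$ is a connected diagonalisable group it is a torus, so there is $X\in\GL_m(\C)$ with $XGX^{-1}$ contained in $D_m$, the group of diagonal matrices; as the Lie algebra of a diagonal group consists of diagonal matrices, $\tilde\g := X\g X^{-1}$ consists of commuting diagonal matrices. Hence $A$ is commutative, and $XAX^{-1}$, being generated by diagonal matrices together with the identity, is a subalgebra of $D_m$. A commutative algebra all of whose elements are semisimple has no nonzero nilpotents, so $A$ is semisimple; thus $A$ is toral in the sense of Section~\ref{sec:toral}. Moreover, since the supplied basis of $\g$ has rational entries, $A$ is spanned by matrices in $M_m(\Q)$, so as a linear subspace of $M_m$ it is defined over $\Q$.

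Next I would show that $A^*$ is a D-group over $\Q$ with Lie algebra $A$. The set $A^* = A\cap\GL_m$ is cut out in $\GL_m$ by the $\Q$-linear equations that define the subspace $A\subset M_m$, hence is a closed subvariety of $\GL_m$ defined over $\Q$. It is a group: $A$ is closed under multiplication and contains the identity, and if $g\in A$ is invertible as a matrix then, $A$ being commutative and semisimple, $g$ is a unit of $A$ whose algebra-inverse coincides with its matrix-inverse, so $g^{-1}\in A^*$. Conjugating by $X$ carries $A^*$ into the diagonal matrices, so $A^*$ is diagonalisable, i.e.\ a D-group. Finally $A^*$ is a Zariski-open subset of the vector space $A$, so its tangent space at the identity, hence its Lie algebra, is $A$ itself.

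It remains to prove $G\subset A^*$, which is the real point. After conjugating by $X$ I may assume $G$, $\g$ and $A$ are all diagonal, with $\tilde\g\subset XAX^{-1}$ and $\widetilde G = XGX^{-1}$ a connected subgroup of $D_m$ with Lie algebra $\tilde\g$. A subalgebra of $D_m$ containing the identity is exactly the set of $\diag(\alpha_1,\ldots,\alpha_m)$ that are constant on the blocks of the partition of $\{1,\ldots,m\}$ determined by its generators; so $XAX^{-1}$ consists of those $\diag(\alpha_1,\ldots,\alpha_m)$ with $\alpha_i=\alpha_j$ whenever $v_i=v_j$ for every $\diag(v_1,\ldots,v_m)\in\tilde\g$. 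For such a pair $(i,j)$ consider the character $\psi_{ij}:D_m\to\C^*$, $\diag(\beta_1,\ldots,\beta_m)\mapsto \beta_i\beta_j^{-1}$, whose differential sends $\diag(v_1,\ldots,v_m)$ to $v_i-v_j$. By the choice of $(i,j)$ this differential vanishes on $\tilde\g$, so $\psi_{ij}$ restricts to a character of the connected group $\widetilde G$ with vanishing differential, and is therefore trivial on $\widetilde G$. Thus every element of $\widetilde G$ satisfies the defining equations $\alpha_i=\alpha_j$ of $XAX^{-1}$, giving $\widetilde G\subset (XAX^{-1})^* = XA^*X^{-1}$ and hence $G\subset A^*$.

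The main obstacle is this last step: unlike the other assertions it is not purely formal, and it rests on two facts that I would justify or cite carefully --- that a subalgebra of $D_m$ with identity is governed by the point-separation of its generators (a finite Stone--Weierstrass/interpolation statement), and that a character of a connected group with vanishing differential is trivial. The diagonalisation and the $\Q$-rationality bookkeeping are routine by comparison.
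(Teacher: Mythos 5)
Your proof is correct, but for the crucial inclusion $G\subset A^*$ it takes a genuinely different route from the paper. The paper's argument is very compact: it notes that $A^*$ is cut out of $\GL_m(\C)$ by the $\Q$-linear equations defining $A$ as a subspace of $M_m$, that a group defined by linear equations has Lie algebra defined by the same equations (so $\mathrm{Lie}(A^*)=A$), and then concludes $G\subset A^*$ in one stroke from $\g\subset A$ together with connectedness of $G$ --- i.e.\ it invokes the standard characteristic-zero fact that a connected algebraic subgroup lies inside any closed subgroup whose Lie algebra contains its own. Your last step proves precisely this fact in the case at hand: after conjugating everything into diagonal form you describe $XAX^{-1}$ by a partition of $\{1,\ldots,m\}$ (the finite Stone--Weierstrass statement you flag), realise the defining equations $\alpha_i=\alpha_j$ of $A^*$ inside $D_m$ as characters $\psi_{ij}$ whose differentials vanish on $\tilde\g$, and use that a character of a connected group with vanishing differential is trivial. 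Both facts you flag are correct and standard over $\C$ (the second fails in characteristic $p$, which is why the paper's appeal to connectedness is equally a characteristic-zero argument). What your approach buys is explicitness --- the equations cutting $G$'s ambient algebra out of the diagonal group are exhibited as characters, very much in the spirit of Proposition~\ref{lem:char1} --- at the cost of length, since you in effect reprove the special case of the subgroup--subalgebra correspondence that the paper simply cites. Your treatment of the remaining assertions ($A^*$ is a $\Q$-defined D-group with Lie algebra $A$) matches the paper's; one small simplification: to see that a matrix-invertible $g\in A$ has its inverse in $A$ you do not need commutativity or semisimplicity, since by Cayley--Hamilton $g^{-1}$ is a polynomial in $g$.
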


\begin{proof}
Note that $A$ has a basis consisting of matrices with coefficients in $\Q$.
Let $A(\Q)$ denote the $\Q$-algebra spanned by such a basis. Consider the 
linear equations that define $A(\Q)$ as a subspace of $M_m(\Q)$. Then those
equations define $A^*$ as a subgroup of $\GL_m(\C)$. Since the basis elements
of $\g$ can be simultaneously be diagonalised, the same holds for the 
elements of $A^*$. It follows that $A^*$ is a D-group, defined over $\Q$.

Since $A^*$, as subgroup of $\GL_m(\C)$, is given by linear equations, its
Lie algebra, as subalgebra of $\gl_m(\C)$, is given by the same equations.
Hence the Lie algebra of $A^*$ is $A$, where the Lie product is given by the
commutator. So since $\g \subset A$, we also have $G\subset A^*$, as $G$ 
is connected.
\end{proof}

Now in order to compute generators of $G(\Z)$ we have the following
algorithm. As input we take a $\Q$-basis of $\g$.

\begin{enumerate}
\item Compute a $\Q$-basis of $A$ (notation of Lemma \ref{lem:fin1}).
\item Let $\Ord = A\cap M_m(\Z)$
and use the algorithm of Section \ref{sec:toralunit}
to compute generators $a_1,\ldots,a_r$ of $\Ord^*$. 
\item Use the algorithm of Section \ref{character} to construct a 
number field $K$ and characters $\chi_1,\ldots,\chi_s : A^* \to K^*$
such that $G$ is given, as subgroup of $A^*$, by the intersection of
their kernels.
\item Use Ge's algorithm (\cite{ge_thesis}) to compute bases of the
lattices
$$\Lambda_i = \{ (e_1,\ldots,e_r)\in \Z^r \mid \chi_i(a_1)^{e_1}\cdots
\chi_i(a_r)^{e_r}=1\}.$$
\item Let $\underline{e}_k = (e_{k,1},\ldots,e_{k,r})$, for $1\leq k\leq t$,
be a basis of the intersection of all $\Lambda_i$ ($1\leq i\leq s$).
Set $u_k = a_1^{e_{k,1}}\cdots a_r^{e_{k,r}}$. Then $u_1,\ldots,u_t$ is a
set of generators of $G(\Z)$.
\end{enumerate}

\begin{prop}
The previous algorithm terminates correctly. 
\end{prop}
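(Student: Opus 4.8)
The plan is to verify two things: that each of the five steps is a finite computation (termination), and that the output $u_1,\ldots,u_t$ generates $G(\Z)$ (correctness). Termination I would dispatch quickly: Step~(1) is linear algebra; Step~(2) invokes the algorithm of Section~\ref{sec:toralunit}, which returns a finite generating set $a_1,\ldots,a_r$ of the finitely generated abelian group $\Ord^*$; Step~(3) invokes the construction of Section~\ref{character}; Step~(4) is Ge's algorithm; and the intersection $\bigcap_i\Lambda_i$ in Step~(5) is computed as in Section~\ref{sec:lat}. Since $\bigcap_i\Lambda_i$ is a lattice it has a finite basis, so Step~(5) produces finitely many $u_k$. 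The substance is therefore correctness, and I would organise it around the single identity $G(\Z)=G\cap\Ord^*$.

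To establish that identity I would first prove $\Ord^*=A^*\cap\GL_m(\Z)$. Here the identity of $A$ is the identity matrix, so $A^*$ is the set of $a\in A$ invertible as matrices; since $A$ is finite-dimensional with $1$, the inverse of such an $a$ is a polynomial in $a$ and hence again lies in $A$. If $a\in\Ord^*$ then $a$ and its inverse lie in $\Ord=A\cap M_m(\Z)$, so $a\in A^*\cap\GL_m(\Z)$; conversely, if $a\in A^*\cap\GL_m(\Z)$ then both $a\in A\cap M_m(\Z)=\Ord$ and $a^{-1}\in A\cap M_m(\Z)=\Ord$, so $a\in\Ord^*$. Combining this with $G\subset A^*$ (Lemma~\ref{lem:fin1}) gives $G(\Z)=G\cap\GL_m(\Z)=G\cap(A^*\cap\GL_m(\Z))=G\cap\Ord^*$. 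By Step~(3) and Proposition~\ref{lem:char1}, applied with $H=A^*$, the subgroup $G$ of $A^*$ is the common kernel of $\chi_1,\ldots,\chi_s$, so
$$G(\Z) = \{\, a\in\Ord^* \mid \chi_i(a)=1 \text{ for } 1\le i\le s \,\}.$$

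The remaining step is the lattice translation. Because $A$ is toral, $\Ord^*$ is abelian, and the generators from Step~(2) give a surjective homomorphism $\phi:\Z^r\to\Ord^*$, $(e_1,\ldots,e_r)\mapsto a_1^{e_1}\cdots a_r^{e_r}$. Since each $\chi_i$ is multiplicative, $\chi_i(\phi(\underline e))=\chi_i(a_1)^{e_1}\cdots\chi_i(a_r)^{e_r}$, so $\phi(\underline e)\in G(\Z)$ precisely when $\underline e\in\Lambda_i$ for every $i$; that is, $\phi^{-1}(G(\Z))=\bigcap_{i=1}^s\Lambda_i=:\Lambda$. As $\phi$ is surjective and $G(\Z)\subseteq\Ord^*$, we get $\phi(\Lambda)=G(\Z)$. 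A basis $\underline e_1,\ldots,\underline e_t$ of $\Lambda$ generates $\Lambda$ as a group, and a group homomorphism sends generators to generators, so the $u_k=\phi(\underline e_k)$ generate $\phi(\Lambda)=G(\Z)$, as claimed.

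I expect the main obstacle to be pinning down the identity $\Ord^*=A^*\cap\GL_m(\Z)$, since this is the bridge between the arithmetic group $G(\Z)$, defined via the ambient $\GL_m(\Z)$, and the unit group $\Ord^*$ of the abstract order produced in Step~(2); the argument rests on $A$ being closed under inverses of its units and on $\Ord=A\cap M_m(\Z)$. Everything after that — surjectivity of $\phi$, multiplicativity of the $\chi_i$, and the passage from a lattice basis to a group generating set — is routine once this identity and the character description of $G$ from Proposition~\ref{lem:char1} are in place.
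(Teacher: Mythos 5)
Your proof is correct and follows essentially the same route as the paper's: identify $\Ord^*$ with $A^*\cap \GL_m(\Z)$, use the characters from Step (3) (via Proposition \ref{lem:char1} with $H=A^*$) to cut out $G(\Z)$ inside $\Ord^*$, and translate membership into the lattice intersection $\bigcap_i \Lambda_i$. The only difference is level of detail: the paper merely \emph{observes} that $A^*\cap\GL_m(\Z)=\Ord^*$, whereas you supply the justification (inverses of matrix-invertible elements of $A$ are polynomials in the element, hence lie in $A$), which is a worthwhile addition but not a different approach.
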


\begin{proof}
Note that by Lemma \ref{lem:fin1},
$A$ is the Lie algebra of $A^*$, so that in Step (3) 
we have the correct input for the algorithm of Section \ref{character}.
A basis of $\Ord$ can be computed using the methods of Section
\ref{sec:lat}. So all steps can effectively be carried out.

Observe that $A^*(\Z) =A^* \cap \GL_m(\Z)$ is equal to $\Ord^*$. 
Now $g\in G(\Z)$ if and only if $g\in A^*(\Z)$ and $\chi_i(g)=1$ for
$1\leq i\leq s$. But this is equivalent to $g = a_1^{e_1}\cdots a_r^{e_r}$ for
certain $e_i\in \Z$, and $\chi_i(g)=1$ for $1\leq i\leq s$. But the latter
condition is equivalent to $(e_1,\ldots,e_r)\in \Lambda_i$ for $1\leq i\leq s$.
Hence $g\in G(\Z)$ if and only if $g$ is a product of the $u_k$.
\end{proof}

\end{document}